\newtheorem{theorem}{Theorem}[section]
\newtheorem{lemma}[theorem]{Lemma}
\newtheorem{proposition}[theorem]{Proposition}
\newtheorem{corollary}[theorem]{Corollary}
\newcommand{\inv}{^{-1}}
\newcommand{\Z}{{\mathbb Z}}
\newcommand{\Q}{{\mathbb Q}}
\date{}
\title{Polynomials defining many units}
\thanks{The first author has been partially supported by CAPES (Proc nº BEX4147/13-8) of Brazil. The second author has been partially supported by Ministerio de Econom\'{\i}a y Competitividad project MTM2012-35240 and Fondos FEDER and Proyecto Hispano-Brasile\~{n}o de Cooperaci\'{o}n Interuniversitaria PHB-2012-0135.}
\author{Osnel Broche}
\address{Osnel Broche, Departamento de Ci\^{e}ncias Exatas, Universidade Federal de Lavras,
Caixa Postal 3037, 37200-000, Lavras, Brazil }
\email{osnel@dex.ufla.br}
\author{\'{A}ngel del R\'{\i}o}
\address{\'{A}ngel del R\'{\i}o, Departamento de Matem\'{a}ticas, Universidad de Murcia,
30100, Murcia, Spain}
\email{adelrio@um.es}
\begin{document}
\begin{abstract}
We classify the polynomials with integral coefficients that, when evaluated on a group element of finite order $n$, define a unit in the integral group ring for infinitely many positive integers $n$. We show that this happens if and only if the polynomial defines generic units in the sense of Marciniak and Sehgal. We also classify the polynomials with integral coefficients which provides units when evaluated on $n$-roots of a fixed integer $a$ for infinitely many positive integers $n$.
\end{abstract}

\maketitle

Let $G$ be a group and let $\Z G$ denote the integral group ring of $G$.
If $x\in G$ and $f$ is a polynomial in one variable with integral coefficients then $f(x)$ denotes the element of $\Z G$ obtained when $f$ is evaluated in $x$.
This paper deals with the problem of when $f(x)$ is a unit of $\Z G$.
It is easy to see that this only depends on the order (possibly infinite) of $x$. 
If the order of $x$ is infinite and $f(x)$ is a unit then necessarily $f=\pm X^m$ for some non-negative integer $m$. Hence we are only interested in the case where $x$ has finite order.
In case $f(x)$ is a unit of $\Z G$ and $n$ is the order of $x$ then we say that $f$ \emph{defines a unit on order} $n$.

Marciniak and Sehgal introduced the following definition \cite{MarciniakSehgal05}.
A polynomial $f$ defines \emph{generic units} if there is a positive integer $D$ such that $f$ defines a unit on every order coprime with $D$.
Marciniak and Sehgal classified the monic polynomials defining generic units.
The first goal of this paper is to complete the classification of polynomials (non-necessarily monic) defining generic units (Theorem~1).
Observe that all the polynomials defining generic units have leading coefficient equal to 1 or -1.

Our second result states that if an integral polynomial $f$ does not define generic units then the set of orders on which $f$ defines generic units is finite (Corollary~\ref{Infinite}) and we give a bound for the cardinality of this set (Corollary~\ref{Bound}). 

%

We could have avoided to give a proof of Theorem~\ref{Generic} because it is a consequence of Corollary~\ref{Infinite}.
However the proof of Theorem~\ref{Generic} is elementary, while the proof of Corollary~\ref{Infinite} uses a deep number theoretical result, namely the $S$-unit Theorem.
The proof of the $S$-unit Theorem ultimately relies on the celebrated Mordell-Weil Theorem.
It would be nice to have an elementary proof of Corollary~\ref{Infinite}.
We would like to thank Hendrik Lenstra for suggesting us the use of the $S$-unit Theorem.
An alternative proof using results by Schinzel on primitive divisors \cite{Schinzel} instead than the S-unit Theorem was suggested to us by B. Mazur.

We will deduce Corollary~\ref{Infinite} and Corollary~\ref{Bound} as consequences of more general results (Theorems~\ref{InfiniteRoots} and \ref{BoundRoots}).
In order to state such results we generalize the definition of polynomials defining units on an order.
Let $f\in \Z[X]$ and let $a$ and $n$ be integers with $n>0$ and $a\ne 0$.
We say that $f$ \emph{defines units on $n$-th roots of} $a$ if $f(x)$ is invertible for every $n$-th root $x$ of $a$ in a ring of characteristic 0 (i.e. $x^n=a$).
If $x$ is an $n$-th root of $a$ in a ring $R$ of characteristic 0 then the map $X\rightarrow x$ induces a ring homomorphism $\Z[X]/(X^n-a)\rightarrow R$.
Thus $f$ defines units on $n$-th roots of $a$ if and only if $f(x)$ is invertible in $\Z[X]/(X^n-a)$, for $x=X+(X^n-a)$.
In particular, $f$ defines units on order $n$ if and only if $f$ defines units on $n$-th roots of $1$.
Theorem~\ref{InfiniteRoots} characterizes the polynomials with integral coefficients that defines units on $n$-th roots of $a$ for infinitely many integers $n$.
For the polynomials $f$ not satisfying this property, Theorem~\ref{BoundRoots} provides a bound for the cardinality of the set of positive integers $n$ for which $f$ defines units on $n$-th roots of $a$.

\medskip

We now establish the basic notation.
If $m$ is a positive integer $m$ then $\zeta_m$ denotes a complex primitive $m$-th root of unity and $\Phi_m$ denotes the $m$-th cyclotomic polynomial, i.e. the minimal polynomial of $\zeta_m$ over the rationals.

We start classifying the polynomials defining generic units. This generalizes \cite[Theorem~3.3]{MarciniakSehgal05}.

\begin{theorem}\label{Generic}
The following conditions are equivalent for a polynomial $f$ in one variable with integral coefficients.
\begin{enumerate}
 \item $f$ defines generic units.
 \item There is an infinite arithmetic sequence $S$ of positive integers such that $f$ defines a unit on every order in $S$.
 \item There is a positive integer $D$ such that for every positive integer $n$, we have that $f$ defines a unit on an order $m$ with $n\le m \le n+D$.
 \item $f=\pm X^m \prod_{i=1}^k \Phi_{m_i}$ with $m$ and $k$ a non-negative integer and $m_1,\dots,m_k$ positive integers which are neither 1 nor prime powers.
\end{enumerate}
\end{theorem}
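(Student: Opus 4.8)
The plan is to prove the cyclic chain $(1)\Rightarrow(2)\Rightarrow(3)\Rightarrow(4)\Rightarrow(1)$, working throughout with the elementary criterion that $f$ defines a unit on order $n$ if and only if $f(\zeta_d)$ is a unit of $\Z[\zeta_d]$ for every divisor $d$ of $n$; equivalently $\prod_{d\mid n}N_{\Q(\zeta_d)/\Q}(f(\zeta_d))=\pm1$. This follows from the decomposition $\Q[X]/(X^n-1)\cong\prod_{d\mid n}\Q(\zeta_d)$ together with the fact that an element of the order $\Z[X]/(X^n-1)$ is a unit exactly when the determinant of multiplication by it, which equals that product of norms, is $\pm1$. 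The implications $(1)\Rightarrow(2)$ and $(2)\Rightarrow(3)$ are immediate: from $(1)$ the progression $S=\{1+kD:k\ge0\}$ consists of integers coprime to $D$, and any arithmetic progression of common difference $b$ meets every interval $[n,n+b]$, so $(2)$ yields $(3)$ with $D=b$.

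For $(4)\Rightarrow(1)$ I would evaluate $f$ at roots of unity directly. Writing $f=\pm X^{m}\prod_i\Phi_{m_i}$, one has $f(\zeta_d)=\pm\zeta_d^{m}\prod_i\Phi_{m_i}(\zeta_d)$, and since $\pm\zeta_d^{m}$ is a unit it suffices that each $\Phi_{m_i}(\zeta_d)$ be a unit. Here $N_{\Q(\zeta_d)/\Q}(\Phi_{m_i}(\zeta_d))=\operatorname{Res}(\Phi_d,\Phi_{m_i})$, which by the classical resultant formula for cyclotomic polynomials equals $1$ unless one of $d,m_i$ is a prime-power multiple of the other, in which case it is a power of a prime. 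Taking $D=\prod_i m_i$, any $n$ coprime to $D$ has every divisor $d$ coprime to every $m_i$; then $d\neq m_i$, the only integral ratio among $d/m_i$ and $m_i/d$ occurs when $d=1$, and there $\Phi_{m_i}(1)=1$ because $m_i$ is not a prime power. Hence each $\Phi_{m_i}(\zeta_d)$ is a unit and $(1)$ holds.

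The substance is $(3)\Rightarrow(4)$. From $(3)$ I would extract an infinite strictly increasing sequence of orders $n$ on which $f$ defines a unit; for each such $n$ the criterion gives $f(1)=\pm1$ (the divisor $d=1$) and $|N_{\Q(\zeta_n)/\Q}(f(\zeta_n))|=1$ (the divisor $d=n$). Writing $c$ for the leading coefficient and $\alpha_1,\dots,\alpha_r$ for the roots of $f$, one has $N_{\Q(\zeta_n)/\Q}(f(\zeta_n))=\pm\,c^{\phi(n)}\prod_j\Phi_n(\alpha_j)$, and the key analytic input is $\lim_{n\to\infty}|\Phi_n(\alpha)|^{1/\phi(n)}=\max(1,|\alpha|)$ for $\alpha$ not a root of unity, which gives $\lim_{n\to\infty}|N_{\Q(\zeta_n)/\Q}(f(\zeta_n))|^{1/\phi(n)}=M(f)$, the Mahler measure $|c|\prod_j\max(1,|\alpha_j|)$. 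Since the left side is $1$ along our sequence, $M(f)=1$; as $|c|\ge1$ and each factor is $\ge1$, this forces $c=\pm1$ and $|\alpha_j|\le1$ for all $j$, whereupon Kronecker's theorem yields $f=\pm X^{m}\prod_i\Phi_{m_i}$. Finally $f(1)=\pm\prod_i\Phi_{m_i}(1)=\pm1$, and since $\Phi_{m_i}(1)$ is $0$ for $m_i=1$, a prime $p$ for $m_i$ a power of $p$, and $1$ otherwise, no $m_i$ can be $1$ or a prime power, which is $(4)$.

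The main obstacle is the asymptotic $|N_{\Q(\zeta_n)/\Q}(f(\zeta_n))|^{1/\phi(n)}\to M(f)$: it rests on the equidistribution of primitive $n$-th roots of unity against $\log|z-\alpha|$, and the delicate point is controlling roots $\alpha$ of modulus $1$ that are not roots of unity, where the integrand is singular. For the present theorem one needs only $M(f)\le1$, and this can be obtained more elementarily, without equidistribution: a root of modulus $>1$ forces $|N_{\Q(\zeta_n)/\Q}(f(\zeta_n))|$ to grow, and once all roots lie in the closed unit disk a leading coefficient of modulus $>1$ does the same, so both are excluded directly; Kronecker's theorem then supplies the cyclotomic shape. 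This is what keeps the proof of Theorem~\ref{Generic} elementary, in contrast with the route through the $S$-unit theorem used for Corollary~\ref{Infinite}.
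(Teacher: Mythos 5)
Your reductions and three of the four implications are fine: $(1)\Rightarrow(2)\Rightarrow(3)$ is as in the paper, and your direct proof of $(4)\Rightarrow(1)$ via the resultant formula $N_{\Q(\zeta_d)/\Q}(\Phi_{m_i}(\zeta_d))=\operatorname{Res}(\Phi_d,\Phi_{m_i})$ is correct and self-contained (the paper simply cites Marciniak--Sehgal here). The problem is $(3)\Rightarrow(4)$, which is the substance of the theorem, and there your argument has a genuine gap exactly at the point you flag. The assertion that ``a root of modulus $>1$ forces $|N_{\Q(\zeta_n)/\Q}(f(\zeta_n))|$ to grow'' is not elementary and is not established by what you write: that norm is $\pm c^{\phi(n)}\prod_j\Phi_n(\alpha_j)$, and the factors $\Phi_n(\alpha_j)$ coming from roots of modulus $\le 1$ (in particular from roots on the unit circle that are not roots of unity) have no easy lower bound; a priori they could decay fast enough to cancel the growth from a root outside the disk or from $|c|\ge 2$. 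Ruling that out is precisely the content of the limit $|\Phi_n(\alpha)|^{1/\phi(n)}\to\max(1,|\alpha|)$, which for $|\alpha|=1$ rests on deep lower bounds (Baker/Gelfond type). So your ``elementary workaround'' silently reuses the hard statement it was meant to avoid. A second symptom of the same gap: you extract from $(3)$ only an infinite increasing sequence of unit orders and never use the bounded-gap structure again. If $M(f)\le 1$ really followed elementarily from infinitely many unit orders, you would have an elementary proof of Corollary~\ref{Infinite}, which the paper explicitly says it cannot give without the $S$-unit Theorem (or Schinzel's primitive divisor results) and poses as an open problem.

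The paper's proof of $(3)\Rightarrow(4)$ avoids both difficulties. First, it works with the full resultant against $X^n-1$ rather than against $\Phi_n$: from a B\'ezout identity $fp_n+(X^n-1)q_n=1$ with controlled degrees it gets the two-sided constraint $\prod_{\alpha}|\alpha^n-1|\le|a|^{d^2}$, and $|\alpha^n-1|$ is far easier to control than $|\Phi_n(\alpha)|$ (it tends to $1$ inside the disk, stays $\le 2$ on the circle, and blows up outside). Second, and crucially, it uses the bounded gaps of condition $(3)$ in a pigeonhole argument: if two exponents $n<m\le n+Dr$ both made $|\alpha^n-1|,|\alpha^m-1|<\delta$ for the same modulus-one root $\alpha$, then $|\alpha^{m-n}-1|<2\delta$, contradicting the choice of $\delta$; this produces an infinite subsequence along which every modulus-one root satisfies $|\alpha^{n_k}-1|\ge\delta$, and only then does a root of modulus $>1$ force divergence. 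Finally, roots of modulus $<1$ are excluded not by a Mahler-measure identity but by a separate symmetry argument: Sehgal's lemma gives $f(x^{-1})f(x)^{-1}=x^{j_n}$, whence $a_i=a_{d-i}$, so the roots come in pairs $\alpha,\alpha^{-1}$ and $|R_+|=|R_-|=0$. To repair your proof you must either import the nontrivial asymptotic for $|\Phi_n(\alpha)|$ as an external theorem (making the proof non-elementary, though it would then yield Corollary~\ref{Infinite} as well), or genuinely exploit the bounded-gap hypothesis along the lines above.
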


\begin{proof}
(1) implies (2) and (2) implies (3) are obvious.
(4) implies (1) was proved in \cite[Theorem~3.3]{MarciniakSehgal05}.
To prove (3) implies (4) we follow the lines of the proof of \cite[Theorem~2.1]{MarciniakSehgal05} introducing some modifications.

In the rest of the proof we assume that $f\in \Z[X]$ and $D$ is a positive integer satisfying condition (3) and we have to prove that $f$ satisfies (4).
If $f$ is divisible in $\Z[X]$ by a polynomial $g$ then clearly $g$ also satisfies condition (3) for the same $D$.
Furthermore, condition (4) is closed under taking products.
Therefore we may assume without loss of generality that $f$ is irreducible and $f(X)\neq \pm X$, and we have to prove that $f=\pm \Phi_m$ with $m$ neither 1 nor a prime power.
Observe that the augmentation of $f(x)$ is $f(1)$. Therefore, if $f$ defines generic units then $f(1)=\pm 1$.
However $\Phi_1(1)=0$ and if $p$ is a prime integer then $\Phi_{p^{\alpha}}(1)=p$.
This implies that if $f=\pm \Phi_m$ then $m$ is neither 1 nor a prime power. 
Therefore we only have to show that one root of $f$ is a root of unity and this is equivalent to prove that all the roots of $f$ have modulus 1.

Let $R$ be the set of complex roots of $f(X)$.
Denote
    $$R_0 =\{\alpha\in R: |\alpha|=1\}, \quad R_{-}=\{  \alpha\in R: |\alpha|< 1\}, \quad R_{+}=\{ \alpha\in R: |\alpha|>1\}.$$
Let $\alpha\in R$. We have to prove that $\alpha\in R_0$.
Let $a$ denote the leading coefficient of $f$ and let $d$ be the degree of $f$.

We claim that if $p$ is an integral polynomial of degree $m$ then $a^m p(\alpha)$ is an algebraic integer.
To prove this it is enough to show that $a\alpha$ is an algebraic integer.
Indeed, write $f=aX^d+\sum_{i=0}^{d-1}a_iX^i$ with $a_i\in \Z$ and let $\hat{f}=X^d+\sum_{i=0}^{d-1} a^{d-i-1}a_iX^i.$
Then $\hat{f}$ is an integral monic polynomial and $\hat{f}(a\alpha)=a^{d-1}f(\alpha)=0$.
This proves the claim.

If $x$ is a group element of order $n$ then $\Z C_n \cong \Z[X]/(X^n-1)$.
Therefore for every positive integer $n$ such that $f$ defines a unit on order $n$ there exist $p_n, q_n \in \Z[X]$ such that
$$f\, p_n+(X^n-1)\, q_n=1.$$
We claim that $p_n$ and $q_n$ can be chosen so that the degree of $p_n$ is smaller than $n$ and the degree of $q_h$ is smaller than $d$.
Indeed, as $X^n-1$ is monic, $p_n=r_n+(X^n-1)t_n$ with $r_n$ and $t_n$ integral polynomials such that $r_n$ has degree smaller than $n$.
Let $s_n=q_n+ft_n$. Then $fr_n+(X^n-1)s_n=1$. If $h$ is the degree of $r_n$ and $k$ is the degree of $s_n$ then
$d+h=n+k$. As $h<n$, we have $k<d$. Replacing $p_n$ and $q_n$ by $r_n$ and $s_n$ we obtain the desired conclusion.

Let $h_n$ denote the degree of $q_n$. By the previous paragraph we may assume that $h_n<d$.
Then $a^n(\alpha^n-1)$ and $a^{h_n}q_n(\alpha)$ are algebraic integers.
For every  $\alpha \in R$ we have
$$(\alpha^n-1)q_n(\alpha)=1$$
and therefore
$$\left(\prod_{\alpha\in R} a^{n}(\alpha^n-1)\right) \left(\prod_{\alpha\in R} a^{h_n} q_n(\alpha)\right) =
a^{d(n+h_n)}.$$
Since the two factors of the left side formula are invariant under the action of the Galois group of $\Q(\alpha)$ over $\Q$ of the splitting field of $f$, we deduce that they are rational.
Moreover, they are algebraic integers and hence they are non-zero integers.
We conclude that
	\begin{equation}\label{Cota}
	 \prod_{\alpha\in R} |\alpha^n-1| \le |a|^{dh_n} \le |a|^{d^2}.
	\end{equation}
If $\alpha\in R^-$ then $\lim_{n\rightarrow \infty} \alpha^n=0$ and hence there is a positive real number $\rho$ such that
	\begin{equation}\label{Cota-}
	 \prod_{\alpha\in R^-} |\alpha^n-1|>\rho.
	\end{equation}

Let  $r=|R_0|$ and
let
 $$\delta = \frac{1}{2} \min \{|\alpha^j-1| \; : \; 1 \leqslant j \leqslant
 rD,\;
\alpha \in R_0 \}$$
if $r\neq 0$ and put $\delta =1$ otherwise.
If $\delta=0$ then $R_0$ contains a root of unity, as desired.
So assume $\delta>0$.
Let
	$$T=\{n : f \text{ defines a unit on order } n \text{ and } |\alpha^n-1|\ge \delta \text{ for all } \alpha \in R_0 \}.$$
We claim that $T$ is infinite. Otherwise there is a positive integer $N$ such that $T$ does not contain any number $n$ greater than $N$.
Now we use condition (3) to ensure that there is an increasing sequence of integers $N<n_0<\dots<n_r$ such that $f$ defines a unit on order $n_i$ for each $i=0,1,\dots,r$, and $n_i-n_{i-1}\le D$ for each $i=1,\dots,r$. For every $i=0,1,\dots,r$, we have $n_i\not\in T$ and hence there is $\alpha_i\in R_0$ such that $|\alpha_i^{n_i}-1|< \delta$. Since $|R_0|=r$, necessarily $\alpha_i=\alpha_j$ for some $0\le i < j \le r$.
Therefore there are positive integers $n<m\le n+Dr$ such that $|\alpha^n-1|,|\alpha^m-1|< \delta$.
As $|\alpha|=1$, we have
	$$2\delta\le |\alpha^{m-n}-1|=|\alpha^m-\alpha^n| \le |\alpha^m-1|+|\alpha^n-1|<2\delta,$$
a contradiction.
Thus $T$ is indeed infinite and hence it contains an infinite increasing sequence $(n_k)$.
Thus
\begin{equation}\label{Cota0}
 \prod_{\alpha\in R_0} |\alpha^{n_k}-1| \ge \delta^r
\end{equation}

Applying (\ref{Cota}), (\ref{Cota-}) and (\ref{Cota0}) for each $n_k$ we obtain
	$$\prod_{\alpha\in R^+} |\alpha^{n_k}-1| \le \frac{|a|^{d^2}}{\rho \delta^r}.$$
If $R_+\ne \emptyset$ then the left side of the previous inequality diverges, yielding a contradiction.
Thus $R_+=\emptyset$.

Fix is a group element $x$ of order $n$ with $n>2d$ and such that $f$ defines a unit on order $n$.
It exists by the assumption.
By \cite[Lemma~2.10]{Sehgal}, there exists an integer $j_{n}$ such that  $-\frac{n}{2}< j_n \le \frac{n}{2}$ and
$$f(x\inv)f(x)^{-1}=x^{j_n}.$$
So, if $a_d=a$ then
$$a_dx^{-d}+a_{d-1}x^{-(d-1)}+ \cdots +
a_1x^{-1}+a_0=
a_dx^{d+j_n}+a_{d-1}x^{d-1+j_n}+
\cdots + a_1x^{1+j_n}+a_0x^{j_n}.$$
Then $x^{d+j_n}$ and $x^{j_{n}}$ belong to the support of the element on the right side  and hence they also
belong to the support of the left side.
Consequently, $j_n \le 0$ and $d+j_n=0$. So, $j_n=-d$ and
 $$a_dx^{-d}+a_{d-1}x^{-(d-1)}+ \cdots +
a_1x^{-1}+a_0=
a_d+a_{d-1}x^{-1}+ \cdots +
a_1x^{-d+1} + a_0 x^{-d},$$
or in other words,
$$a_i=a_{d-i},$$ for all $i$. This symmetry
yields that  if $\alpha$ is a complex root of
$f(X)$, then so is $\alpha^{-1}$.
Thus $0=|R_{+}| = |R_{-}|$.
In other words every root of $f$ has modulus $1$.
This implies that all of them are roots of unity.
This finishes the proof.
\end{proof}

Before going on we need a lemma which will be used to characterize when a cyclotomic polynomial defines units on $n$-th roots of $a$.

\begin{lemma}\label{Phipm1}
Let $m$ be a positive integer and $a$ an integer. Then
\begin{enumerate}
    \item $\Phi_m(a)=1$ if and only if one of the following conditions hold:
    \begin{enumerate}
        \item $a=0$ and $m\ne 1$.
        \item $a=1$ and $m$ is neither 1 nor a prime power.
        \item $a=-1$ and $m$ is neither $1$, nor $2$, nor twice a prime power.
        \item $a=2$ and $m=1$.
    \end{enumerate}
    \item $\Phi_m(a)=-1$ if and only if either $a=0$ and $m=1$ or $a=-2$ and $m=2$.
\end{enumerate}
\end{lemma}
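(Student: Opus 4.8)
The plan is to evaluate $\Phi_m(a)$ for each relevant integer $a$ and decide whether the resulting integer is $1$, is $-1$, or has absolute value greater than $1$. The two tools are the factorization $\Phi_m(a)=\prod_{\zeta}(a-\zeta)$, where $\zeta$ ranges over the primitive complex $m$-th roots of unity, and the Möbius expression $\Phi_m(X)=\prod_{d\mid m}(X^d-1)^{\mu(m/d)}$; the exact small polynomials $\Phi_1(X)=X-1$ and $\Phi_2(X)=X+1$ will pin down the boundary evaluations. Since $\Phi_m$ has integer coefficients, $\Phi_m(a)\in\Z$, so ruling out $|\Phi_m(a)|=1$ amounts to showing $|\Phi_m(a)|>1$.

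First I would treat every $a$ with $|a|\ge 2$ by a modulus estimate. For $\zeta$ on the unit circle, $|a-\zeta|^2=a^2+1-2a\operatorname{Re}\zeta$, whence $|a-\zeta|\ge|a|-1\ge 1$, with equality $|a-\zeta|=1$ only for $(a,\zeta)=(2,1)$ or $(a,\zeta)=(-2,-1)$; moreover if $|a|\ge 3$ then every factor has modulus $\ge|a|-1\ge 2$. Since $\zeta=1$ is a primitive $m$-th root only for $m=1$ and $\zeta=-1$ only for $m=2$, and a product of factors each $\ge 1$ equals $1$ only when all factors equal $1$, this forces $|\Phi_m(a)|>1$ for all pairs with $|a|\ge 2$ except possibly $(a,m)\in\{(2,1),(-2,2)\}$. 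The exceptional values are computed directly from $\Phi_1(X)=X-1$ and $\Phi_2(X)=X+1$: $\Phi_1(2)=1$ and $\Phi_2(-2)=-1$. Hence, among $|a|\ge 2$, the equation $\Phi_m(a)=1$ holds only at $(a,m)=(2,1)$ and $\Phi_m(a)=-1$ only at $(a,m)=(-2,2)$, which are exactly case (d) and the second alternative of part (2).

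There remain $a\in\{-1,0,1\}$. For $a=0$ the value $\Phi_m(0)$ is the constant term of $\Phi_m$, equal to $-1$ when $m=1$ and to $1$ when $m\ge 2$, giving case (a) and the first alternative of part (2). For $a=1$ I would use the evaluation already invoked in the proof of Theorem~\ref{Generic}, namely $\Phi_1(1)=0$, $\Phi_{p^{k}}(1)=p$ for a prime $p$, and $\Phi_m(1)=1$ when $m$ has at least two distinct prime divisors; this yields case (b) and shows $a=1$ never gives $-1$. The case $a=-1$ is the delicate one, and I expect it to be the main obstacle, because $\Phi_m(-1)$ has no single uniform formula. I would split on the parity of $m$. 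If $m$ is odd then every divisor $d\mid m$ is odd, so $(-1)^d-1=-2$, and the Möbius formula gives $\Phi_m(-1)=(-2)^{\sum_{d\mid m}\mu(m/d)}$, which is $-2$ for $m=1$ and $1$ for odd $m>1$.

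For even $m>2$ I would invoke the cyclotomic doubling identities $\Phi_{2\ell}(X)=\Phi_\ell(X^{2})$ when $\ell$ is even, and $\Phi_{2\ell}(X)=\Phi_\ell(-X)$ when $\ell$ is odd and $\ell>1$; writing $m=2\ell$ and evaluating at $X=-1$, both subcases collapse to $\Phi_m(-1)=\Phi_{m/2}(1)$. Feeding in the $a=1$ evaluation, for even $m>2$ the value $\Phi_m(-1)=\Phi_{m/2}(1)$ equals $1$ exactly when $m/2$ is neither $1$ nor a prime power, that is, exactly when $m$ is not twice a prime power, and otherwise equals the prime $p$ with $m/2=p^{k}$. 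Together with $\Phi_1(-1)=-2$ and $\Phi_2(-1)=0$ and the odd-$m$ computation, this shows $\Phi_m(-1)=1$ precisely when $m$ is none of $1$, $2$, or twice a prime power (case (c)), and that $\Phi_m(-1)$ is never $-1$. Collecting the four value-$1$ situations (a)--(d) and the two value-$(-1)$ situations finishes both parts; the only genuinely non-routine bookkeeping is the analysis of $a=-1$, while the remaining cases reduce to the modulus estimate or to direct evaluations.
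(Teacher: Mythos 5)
Your proof is correct and follows essentially the same route as the paper's: the modulus bound on $\prod_{\zeta}|a-\zeta|$ to dispose of $|a|\ge 2$ apart from the exceptional pairs $(2,1)$ and $(-2,2)$, direct evaluation at $a=0$, the standard values of $\Phi_m(1)$, and reduction of $\Phi_m(-1)$ to an evaluation at $1$ via cyclotomic identities. The only differences are cosmetic: you handle $m=1,2$ inside the modulus estimate rather than separately, and for odd $m$ you use the M\"obius product $\prod_{d\mid m}(X^d-1)^{\mu(m/d)}$ where the paper uses $\Phi_m(-1)=\Phi_{2m}(1)$.
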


\begin{proof}
The result is obvious in case $m=1$ or $2$.
Suppose that $m\ge 3$.
As $\Phi_m=\prod_{\zeta\in R_m} (X-\zeta)$, where $R_m$ is the set of primitive $m$-th roots of unity, if $a\not\in \{-1,0,1\}$ then $|a-\zeta|>1$ and hence $\Phi_m(a)\ne \pm 1$.
Moreover, $\Phi_m(0)=\prod_{\zeta\in R_m} (-\zeta)$ and if $\zeta\in R_m$ then $\zeta\ne \zeta\inv \in R_m$. 
Thus $\Phi_m(0)=1$.
It remains to consider the cases where $a=\pm 1$.

Assume that $a=1$. If $m=p^{\alpha}$ with $p$ prime then $\Phi_m(1)=p$.
Suppose otherwise that $m$ is not a prime power and let $m=p_1^{\alpha_1}\cdots p_k^{\alpha_k}$ be the prime factorization of $m$.
Then $\Phi_m(1)=\Phi_{p_1\cdots p_k}(1)=\Phi_{p_2\dots p_k}(1)\inv \Phi_{p_2\dots p_k}(1)=1$.
Here we have used the following well known facts:
\begin{enumerate}
\item\label{PhiReducion} If $p_1,\dots,p_k$ are distinct primes and $\alpha_i\ge 1$ for each $i=1,\dots,k$ then
    $$
    \Phi_{p_1^{\alpha_1}\dots p_k^{\alpha_k}}(X) = \Phi_{p_1\dots p_k}\left(X^{p_1^{\alpha_1-1}\dots p_k^{\alpha_k-1}}\right).
    $$
\item\label{PhiCoprimes} If $\gcd(h,m)=1$ then
    $$
    \Phi_{hm}(X) = \prod_{d|h} \Phi_m(X^d)^{\mu\left(\frac{h}{d}\right)}.
    $$
where $\mu$ denotes the M\"{o}ebius function.
\end{enumerate}

Suppose that $a=-1$. If $m$ is odd then $\Phi_m(X)=\Phi_{2m}(-X)$ and hence $\Phi_m(-1)=\Phi_{2m}(1)$. 
Thus, in this case $\Phi_m(-1)=1$ unless $m$ is 1. If $m$ is a multiple of $4$ then $\Phi_m(-1)=\Phi_m(1)$, by (\ref{PhiReducion}). Again $\Phi_m(-1)=1$ unless $m$ is a power of $2$. Finally, if $m=2n$ with $n$ odd, then $\Phi_m(-1)=\Phi_n(1)$, which is 1, unless $n$ is either 1 or a prime power.

\end{proof}

The following lemma is obvious.

\begin{lemma}\label{Rewriting}
Let $f$ be an irreducible polynomial in $\Z[X]$ of positive degree, let $\alpha$ be a complex root of $f$ and let $n$ be a positive integer.
Then the following statements are equivalent.
\begin{enumerate}
\item $f$ defines units on $n$-roots of $a$.
\item There are $p,q\in \Z[X]$ such that $pf+q(X^n-a)=1$,
\item $\alpha^n-a$ is a unit in $\Z[\alpha]$.
\end{enumerate}
\end{lemma}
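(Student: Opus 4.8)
The plan is to treat the symmetric identity in statement~(2) as the central object and to recover each of statements~(1) and~(3) from it by reducing modulo one of the two factors $f$ and $X^n-a$. Both reductions are formal manipulations with a B\'ezout relation; the only substantive ingredient is the identification $\Z[\alpha]\cong \Z[X]/(f)$.

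First I would dispose of the equivalence (1)$\Leftrightarrow$(2). By the discussion preceding the lemma, $f$ defines units on $n$-th roots of $a$ if and only if the image of $f$ is a unit in $\Z[X]/(X^n-a)$. Unwinding invertibility, this says there is $p\in\Z[X]$ with $fp\equiv 1\pmod{X^n-a}$, i.e.\ $fp-1\in(X^n-a)$, i.e.\ $fp-1=(X^n-a)h$ for some $h\in\Z[X]$; setting $q=-h$ this is exactly $pf+q(X^n-a)=1$, and the converse reading is immediate. Note that this equivalence uses neither the irreducibility of $f$ nor the hypothesis on its degree.

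Next I would prove (2)$\Leftrightarrow$(3), where irreducibility enters. Since $f$ has positive degree and is irreducible in $\Z[X]$, it is primitive and irreducible over $\Q$; hence, by Gauss's lemma, the kernel of the evaluation homomorphism $\Z[X]\to\C$, $X\mapsto\alpha$, is precisely the ideal $(f)$, so that $\Z[\alpha]\cong\Z[X]/(f)$ with $\alpha$ corresponding to the class of $X$. Under this isomorphism $\alpha^n-a$ is a unit of $\Z[\alpha]$ if and only if the class of $X^n-a$ is invertible in $\Z[X]/(f)$, i.e.\ there is $q\in\Z[X]$ with $(X^n-a)q\equiv 1\pmod f$, which rearranges to the identity $pf+q(X^n-a)=1$ of~(2). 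Reading the chain backwards yields the reverse implication.

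The one point that is not purely formal is the isomorphism $\Z[\alpha]\cong\Z[X]/(f)$: one must check that $(f)$ is the full kernel of evaluation at $\alpha$, not merely contained in it. This is exactly where Gauss's lemma and the primitivity of $f$ (forced by irreducibility in $\Z[X]$ together with positive degree) are needed, and the argument remains valid even when the leading coefficient of $f$ is not $\pm1$, so that $\alpha$ need not be an algebraic integer. Everything else in the three equivalences is bookkeeping with the single relation $pf+q(X^n-a)=1$, which is why the statement is declared obvious.
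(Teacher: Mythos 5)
Your proof is correct and fills in exactly the argument the paper omits (the authors simply declare the lemma obvious): (1)$\Leftrightarrow$(2) is the paper's own reformulation of the definition via $\Z[X]/(X^n-a)$, and (2)$\Leftrightarrow$(3) is reduction modulo $f$ once one knows $\Z[\alpha]\cong\Z[X]/(f)$. You are right to single out the identification of the kernel of evaluation with $(f)$ via primitivity and Gauss's lemma as the one non-formal step, since $f$ need not be monic in the paper's applications and $\alpha$ need not be an algebraic integer.
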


\begin{proposition}\label{CyclotomicDefUnits}
Let $n$ and $m$ be a positive integers and let $a$ be a non-zero integer. Let $d=\frac{m}{\gcd(n,m)}$. Then the following conditions are equivalent:
\begin{enumerate}
\item $\Phi_m$ defines units on $n$-th roots of $a$.
\item $\zeta_d-a$ is a unit in $\Z[\zeta_m]$.
\item $\zeta_d-a$ is a unit in $\Z[\zeta_d]$.
\item $\Phi_d(a)=\pm 1$.
\item One of the following conditions hold:
    \begin{enumerate}
    \item $a=1$ and $d$ is neither 1 nor a prime power.
    \item $a=-1$ and $d$ is neither 1, nor 2, nor twice a prime power.
    \item $a=-2$ and $d=2$.
    \item $a=2$ and $d=1$.
\end{enumerate}
\end{enumerate}
\end{proposition}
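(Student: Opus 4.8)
The plan is to prove the cyclic chain $(1)\Leftrightarrow(2)\Leftrightarrow(3)\Leftrightarrow(4)\Leftrightarrow(5)$, peeling off one condition at a time and reducing everything to a norm computation in the cyclotomic field $\Q(\zeta_d)$.

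First I would handle $(1)\Leftrightarrow(2)$. Since $\Phi_m$ is irreducible with complex root $\zeta_m$, Lemma~\ref{Rewriting} says that $(1)$ holds if and only if $\zeta_m^n-a$ is a unit in $\Z[\zeta_m]$. The key observation is that $\zeta_m^n$ has order $m/\gcd(n,m)=d$ in the group of roots of unity, so it is a primitive $d$-th root of unity. Because $\Gal(\Q(\zeta_m)/\Q)$ permutes the primitive $d$-th roots of unity transitively and preserves $\Z[\zeta_m]$, whether $\eta-a$ is a unit in $\Z[\zeta_m]$ does not depend on the choice of primitive $d$-th root $\eta$; taking $\eta=\zeta_d$ gives $(2)$.

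Next, for $(2)\Leftrightarrow(3)$ I would use that $d\mid m$, so $\Z[\zeta_d]\subseteq\Z[\zeta_m]$ and $\zeta_d-a\in\Z[\zeta_d]$. The implication $(3)\Rightarrow(2)$ is immediate, since a unit of a subring stays a unit in the overring. For the converse I would invoke the standard fact that a nonzero element of a ring of integers is a unit precisely when its norm to $\Q$ is $\pm1$, together with transitivity of the norm: for $\beta=\zeta_d-a$ one has
\[
N_{\Q(\zeta_m)/\Q}(\beta)=N_{\Q(\zeta_d)/\Q}(\beta)^{[\Q(\zeta_m):\Q(\zeta_d)]},
\]
so one norm equals $\pm1$ if and only if the other does. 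This descent from the larger ring $\Z[\zeta_m]$ to the smaller ring $\Z[\zeta_d]$ is the step I expect to be the most delicate, since it is the only place where genuine algebraic number theory (the norm characterization of units and the multiplicativity of the norm in a tower) is needed.

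For $(3)\Leftrightarrow(4)$ I would compute the norm explicitly. Writing $R_d$ for the set of primitive $d$-th roots of unity, the Galois conjugates of $\zeta_d-a$ are exactly the $\zeta-a$ with $\zeta\in R_d$, so
\[
N_{\Q(\zeta_d)/\Q}(\zeta_d-a)=\prod_{\zeta\in R_d}(\zeta-a)=(-1)^{\varphi(d)}\prod_{\zeta\in R_d}(a-\zeta)=(-1)^{\varphi(d)}\Phi_d(a),
\]
whence the norm is $\pm1$ exactly when $\Phi_d(a)=\pm1$. Finally $(4)\Leftrightarrow(5)$ is a direct translation of Lemma~\ref{Phipm1} applied to $\Phi_d(a)$: since $a\ne0$ the cases with $a=0$ are discarded, and the surviving possibilities $\Phi_d(a)=1$ (namely $a=1$ with $d$ neither $1$ nor a prime power, $a=-1$ with $d$ neither $1$, $2$, nor twice a prime power, and $a=2$ with $d=1$) together with $\Phi_d(a)=-1$ (namely $a=-2$ with $d=2$) are precisely the four alternatives listed in $(5)$.
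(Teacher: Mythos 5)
Your proposal is correct and follows essentially the same route as the paper: Lemma~\ref{Rewriting} plus Galois conjugacy of $\zeta_m^n$ and $\zeta_d$ for $(1)\Leftrightarrow(2)$, the norm characterization of units together with $N_{\Q(\zeta_m)/\Q}(\beta)=N_{\Q(\zeta_d)/\Q}(\beta)^{[\Q(\zeta_m):\Q(\zeta_d)]}$ for $(2)\Leftrightarrow(3)$, the explicit computation $N_{\Q(\zeta_d)/\Q}(a-\zeta_d)=\Phi_d(a)$ for the link to $(4)$, and Lemma~\ref{Phipm1} for $(4)\Leftrightarrow(5)$. No gaps.
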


\begin{proof}
The equivalence between (1) and (2) is a consequence of Lemma~\ref{Rewriting} (and the fact that $\zeta_d$ and $\zeta_m^n$ are Galois conjugate in $\Q(\zeta_m)$).
(3) obviously implies (2). If (2) holds then $\pm 1=N_{\Q(\zeta_m)/\Q}(\zeta_d-a) = N_{\Q(\zeta_d)/\Q}(\zeta_d-a)^{[\Q(\zeta_m):\Q(\zeta_d)]}$.
Then $N_{\Q(\zeta_d)/\Q}(\zeta_d-a)=\pm 1$ and hence $\zeta_d-a$ is a unit of $\Z[\zeta_d]$. Thus (2) implies (3).

Moreover $\Phi_d=\prod_{r\in U} (X-\zeta_d^r)$ with $U$ the set of positive integers $r< d$ coprime with $d$.
Then  $N_{\Q(\zeta_d)/\Q}(a-\zeta_d)= \prod_{r\in U} (a-\zeta_d^r)=\Phi_d(a)$.
Thus $a-\zeta_d$ is invertible in $\Z[\zeta_d]$ if and only if $\Phi_d(a)=\pm 1$.
This proves that (2) and (4) are equivalent.

The equivalence between (4) and (5) is a consequence of Lemma~\ref{Phipm1}.
\end{proof}

We now classify the polynomials $f$ and the non-zero integers $a$ such that $f$ defines units on $n$-th roots of $a$ for infinitely many positive integers $n$.
We will use multiplicative valuations as defined in \cite{Pierce}.
A place of a field $K$ is an equivalence class of valuations of $K$.
An infinite (respectively, finite) place is an equivalence class of Archimedean (respectively, non-Archimedean) valuations.

\begin{theorem}\label{InfiniteRoots}
Let $f$ be a polynomial in one variable with integral coefficients and let $a$ be a non-zero integer.
Then the following conditions are equivalent:
\begin{enumerate}
\item $f$ defines units on $n$-th roots of $a$ for infinitely many positive integers $n$.
\item $f=\pm X^m\prod_{i=1}^k \Phi_{m_i}$ for $m\ge 0$ and $m_1,\dots,m_k$ positive integers and one of the following conditions hold
\begin{enumerate}
\item $a=1$ and each $m_i$ is neither 1 nor a prime power.
\item $a=-1$ and each $m_i$ is neither 1, nor 2, nor twice a prime power.
\item $a=-2$, $m=0$ and there is $e\ge 1$ such that each $m_i=2^eh_i$ with $h_i$ odd.
\item $a=2$ and and $m=0$.
\end{enumerate}
\end{enumerate}
\end{theorem}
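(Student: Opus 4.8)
The plan is to prove the two implications separately. The direction (2)$\Rightarrow$(1) is an elementary construction; for (1)$\Rightarrow$(2) I reduce $f$ to its irreducible factors, and the only non-elementary ingredient is the $S$-unit theorem, which I use to force the roots of those factors to be roots of unity.

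For (2)$\Rightarrow$(1) I argue by multiplicativity. In a commutative ring a product is a unit iff each factor is, so if $g$ and $h$ define units on $n$-th roots of $a$ then so does $gh$; also $\pm 1$ defines units on every $n$-th root of $a$, and when $a=\pm 1$ so does $X$, since $x\cdot(\pm x^{n-1})=\pm a=\pm 1$. Hence it suffices to produce, in each of the four cases, one infinite set of $n$ working simultaneously for all the cyclotomic factors, and by Proposition~\ref{CyclotomicDefUnits} this amounts to $\Phi_{d}(a)=\pm 1$ for $d=m_i/\gcd(n,m_i)$. For $a=1$ or $a=-1$ I take $n$ coprime to $\prod_i m_i$, so $d=m_i$ for every $i$ and Lemma~\ref{Phipm1} gives $\Phi_{m_i}(a)=1$; for $a=2$ I take $n$ a multiple of $\operatorname{lcm}_i m_i$, so each $d=1$ and $\Phi_1(2)=1$; for $a=-2$, writing $m_i=2^{e}h_i$ with $h_i$ odd, I take $n=2^{e-1}t$ with $t$ odd and divisible by every $h_i$, so $\gcd(n,m_i)=m_i/2$, each $d=2$ and $\Phi_2(-2)=-1$. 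In every case there are infinitely many admissible $n$.

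For (1)$\Rightarrow$(2) let $I$ be the infinite set of $n$ on which $f$ defines units on $n$-th roots of $a$, and reduce to irreducible factors. The content of $f$ must be $\pm 1$, since otherwise a non-unit rational integer would divide $f(x)$; and since a product is a unit only if each factor is, every primitive irreducible factor $g$ of $f$ defines units on $n$-th roots of $a$ for all $n\in I$. A factor $g=\pm X$ defines units exactly when $a=\pm 1$, which accounts for the power $\pm X^{m}$ and forces $m=0$ when $a\in\{2,-2\}$. The crux is to show that every other irreducible factor has a root that is a root of unity. Let $\alpha\ne 0$ be a root of such a factor $g$ and let $c$ be its leading coefficient; by Lemma~\ref{Rewriting}, $\alpha^{n}-a$ is a unit of $\Z[\alpha]$ for all $n\in I$. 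In $K=\Q(\alpha)$ choose a finite set $S$ of places containing the archimedean ones together with every finite place at which $\alpha$, $a$ or $c$ fails to be a unit. Since $c\alpha$ is an algebraic integer (the argument already used in the proof of Theorem~\ref{Generic}), we have $\Z[\alpha]\subseteq\O_{K,S}$, so units of $\Z[\alpha]$ are $S$-units, while $\alpha$ and $a$ are $S$-units by construction. For each $n\in I$ the identity
$$\frac{\alpha^{n}}{a}+\frac{a-\alpha^{n}}{a}=1$$
writes $1$ as a sum of two $S$-units. By the $S$-unit theorem the equation $x+y=1$ has only finitely many solutions in $S$-units, so $\alpha^{n}/a$ takes finitely many values as $n$ ranges over the infinite set $I$; thus $\alpha^{n_1}=\alpha^{n_2}$ for some $n_1>n_2$ in $I$ and $\alpha^{n_1-n_2}=1$. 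Hence $\alpha$ is a root of unity and $g=\pm\Phi_{m_i}$.

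It remains to assemble $f=\pm X^{m}\prod_i\Phi_{m_i}$ and identify the admissible $m_i$. Each $\Phi_{m_i}$ defines units on $n$-th roots of $a$ for all $n\in I$, so by Proposition~\ref{CyclotomicDefUnits} we have $\Phi_{d}(a)=\pm 1$ with $d=m_i/\gcd(n,m_i)$ for infinitely many $n$; by Lemma~\ref{Phipm1} this can happen only for $a\in\{1,-1,2,-2\}$, which selects the case. Since $\gcd(n,m_i)$ realizes every divisor of $m_i$ on an infinite set of $n$, the condition reduces to $m_i$ having a divisor in the relevant ``good'' set of Lemma~\ref{Phipm1}, and unwinding this for each value of $a$ gives conditions (a)--(d). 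The only point requiring care is $a=-2$: there $d=2$ is forced for every factor at every $n\in I$, so $v_2(n)=v_2(m_i)-1$ holds simultaneously for all $i$ at any fixed $n\in I$, forcing all the $m_i$ to share one $2$-adic valuation $e\ge 1$, which is the common exponent in (c). I expect the $S$-unit step to be the sole real obstacle; the rest is divisibility bookkeeping.
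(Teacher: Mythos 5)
Your proposal is correct and follows essentially the same route as the paper: the same case-by-case construction of admissible $n$ via Proposition~\ref{CyclotomicDefUnits} for (2)$\Rightarrow$(1), and the same $S$-unit argument (writing $\alpha^n/a+(a-\alpha^n)/a=1$ in the $S$-units of $\Q(\alpha)$) to force roots of unity for (1)$\Rightarrow$(2), followed by the same $2$-adic bookkeeping in the $a=-2$ case. The only cosmetic differences are that you handle $a=1$ directly by coprimality instead of citing Theorem~\ref{Generic}, and you enlarge $S$ by the places dividing the leading coefficient, which is harmless.
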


\begin{proof}
(2) implies (1). Assume that $f=\pm X^m\prod_{i=1}^k \Phi_{m_i}$ and $a$ satisfy one of the conditions of (2).
For every $i=1,\dots,k$ let $d_i=\frac{m_i}{\gcd(n,m_i)}$.

If $a=1$ then $f$ defines generic units, by Theorem~\ref{Generic}, and hence it defines units on $n$-th roots of 1 for infinitely many positive integers $n$.

If $a=-1$ then each $m_i$ is neither 1, nor 2, nor twice a prime power. If $n$ is coprime with each $m_i$ then $d_i=m_i$ and hence $\Phi_{m_i}$ defines units on $n$-th roots of $-1$, by Proposition~\ref{CyclotomicDefUnits} for every $i$. As obviously $X$ defines units on $n$-th roots of $-1$ we deduce that $f$ defines units on $n$-th roots of $-1$ for every $n$ coprime with each $m_i$.

Suppose that $a=-2$, $m=0$ and $m_i=2^eh_i$ with $e\ge 1$ and $h_i$ odd. Let $n=2^{e-1}n_1$ with $n_1$ multiple of each $h_i$. Then $d_i=2$ and hence $\Phi_{m_i}$ defines units on $n$-roots of $-2$, by Proposition~\ref{CyclotomicDefUnits}. Hence $f=\pm \prod_{i=1}^k \Phi_{m_i}$ defines units of $n$-th roots of $-2$ for infinitely many positive integers $n$.

Finally assume that $a=2$ and $m=0$. Then for every positive integer $n$ which is multiple of each $m_i$, we have $d_i=1$.
Hence $\Phi_{m_i}$ defines units on $n$-th roots of 2. Thus again $f$ defines units on $n$-th roots of unity for infinitely many positive integers $n$.

(1) implies (2). Assume that $f$ define units on $n$-th roots of $a$ for infinitely many positive integers $n$.
We first prove that every irreducible factor of $f$ is either $X$ or a cyclotomic polynomial.
To prove that we may assume without loss of generality that $f$ is irreducible and different from $X$.
This part of the proof was given to us by Hendrik Lenstra in a private communication.

Let $\alpha$ be a complex root of $f$. We have to prove that $\alpha$ is a root of unity.
Let $M$ be the set of positive integers $n$ such that $f$ defines units on $n$-th roots of $a$.
By Lemma~\ref{Rewriting}, we have $M=\{n\ge 1 : \alpha^n-1 \text{ is a unit in } \Z[\alpha]\}$.
Let $K=\Q(\alpha)$.
As $\alpha$ is algebraic, $K$ is a number field. Let $S$ be the set of places of $K$ formed by all the infinite places and the finite places $v$ for which $v(\alpha)\ne 1$ or $v(a)\ne 1$.
Then $S$ is finite. Let $O_S$ be the subring of $K$ formed by the elements $x$ of $K$ such that $v(x)\le 1$ for every place $v$ which is not in $S$. Then $\alpha$ and $a$ are units of $O_S$ and hence $\Z[\alpha,\alpha\inv]\subseteq O_S$.
If $n\in M$ then $X=\alpha^n/a$ and $Y=1-\alpha^n/a$ is a solution of the equation $X+Y=1$ in the units of $O_S$.
However, the $S$-unit Theorem (see e.g. \cite{EvertseGyoriStewart} or \cite{Lang83}) states that this equation have finitely many solutions in the units of $O_S$.
Thus $\{\alpha^n:n\in M\}$ is finite. As $M$ is infinite, there are $n<m$ in $M$ such that $\alpha^n=\alpha^m$ and hence $\alpha^{m-n}=1$.
This prove that $\alpha$ is a root of unity as desired.

Thus $f=c X^m \prod_{i=1}^{k} \Phi_{m_i}$ for $c\ne 0$, $m\ge 0$, $k\ge 0$ and $m_i$ positive integers.
If $c\ne \pm 1$, then clearly $f$ does not define units on any $n$-th root of an integer. Thus $c=\pm 1$. By Proposition~\ref{CyclotomicDefUnits}, $a$ is $\pm 1$ or $\pm 2$. Moreover, by the same proposition, if $a=1$ then each $m_i$ is neither 1 nor a prime power; if $a=-1$ then each $m_i$ is neither 1, nor 2, nor twice a prime power; and if $a=-2$ then $m_i$ is even. Moreover, if $a=\pm 2$ then $X$ does not define units on $n$-th roots of $a$ for any $n$ and hence in this cases $m=0$.

It remains to prove that if $a=-2$ then all the $m_i$'s has the same valuation at 2. Suppose that $m_1=2^{e_1}h_1$ and $m_2=2^{e_2}h_2$ with $h_1h_2$ odd. If $\Phi_{m_1}\Phi_{m_2}$ defines units on $n$-roots of $-2$ with $n=2^gh$ and $h$ odd then $m_1=2\gcd(n,m_1)$ and $m_2=2\gcd(n,m_2)$. Thus $e_1-\min(e_1,g)=e_2-\min(e_2,g)=1$. Thus $\min(e_1,g)=g=\min(e_2,g)$ and hence $e_1=e_2$.
\end{proof}

The following corollary is a direct consequence of Theorem~\ref{Generic} and the specialization of Theorem~\ref{InfiniteRoots} to $a=1$.

\begin{corollary}\label{Infinite}
The following conditions are equivalent for a polynomial $f$ in one variable with integral coefficients.
\begin{enumerate}
 \item $f$ defines generic units.
 \item $f$ defines units on infinitely many orders.
\end{enumerate}
\end{corollary}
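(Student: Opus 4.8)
The plan is to show that both conditions are, via the two theorems already established, equivalent to the explicit normal form for $f$ recorded in Theorem~\ref{Generic}(4), and then simply read off the equivalence. No genuinely new argument is required; the corollary is a bookkeeping consequence of matching up the two classifications, and in particular the substantive input (the $S$-unit Theorem) has already been absorbed into Theorem~\ref{InfiniteRoots}.

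The implication $(1)\Rightarrow(2)$ is immediate and needs none of the heavy machinery: if $f$ defines generic units then, by definition, there is a positive integer $D$ such that $f$ defines a unit on every order coprime with $D$; since there are infinitely many positive integers coprime with $D$, it follows at once that $f$ defines units on infinitely many orders.

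For $(2)\Rightarrow(1)$ I would invoke the translation recorded in the introduction, namely that $f$ defines a unit on order $n$ if and only if $f$ defines units on the $n$-th roots of $1$. Under this identification, condition (2) is exactly statement (1) of Theorem~\ref{InfiniteRoots} in the special case $a=1$. Applying Theorem~\ref{InfiniteRoots} with $a=1$, we conclude that $f=\pm X^m\prod_{i=1}^k \Phi_{m_i}$ with $m\ge 0$ and each $m_i$ neither $1$ nor a prime power, since among the four subcases of part (2) of that theorem only subcase (a) pertains to $a=1$. But this is precisely condition (4) of Theorem~\ref{Generic}, and hence, by that theorem, $f$ defines generic units.

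The only point requiring any care is verifying that specializing Theorem~\ref{InfiniteRoots} to $a=1$ isolates subcase (2)(a): the remaining subcases explicitly require $a=-1$, $a=-2$ or $a=2$, so they are vacuous here and no spurious families of polynomials survive. Apart from this routine check the argument is entirely formal, so I expect no real obstacle — the difficulty has been dealt with upstream inside Theorem~\ref{InfiniteRoots}.
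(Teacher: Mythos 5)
Your proof is correct and follows exactly the route the paper intends: the corollary is stated there as a direct consequence of Theorem~\ref{Generic} and the specialization of Theorem~\ref{InfiniteRoots} to $a=1$, which is precisely the matching of normal forms you carry out. The only difference is that you spell out the bookkeeping (including the trivial direction from the definition of generic units), which the paper leaves implicit.
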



Using a result from \cite{Evertse84} which bounds the number of solutions of an equation of the type $a X+b Y=1$ on $S$-units one can give a bound on the numbers of orders on which $f$ defines a unit.

\begin{theorem}\label{BoundRoots}
Let $f$ be a polynomial in one variable with integral coefficients and let $a$ be non-zero integer.
Write $f=c_dX^d+c_{d+1}X^{d+1} + \dots +c_n X^n$ with $d\le n$ and $c_dc_n\ne 0$.
Let $M$ be the set of positive integers $m$ such that $f$ defines units on $m$-roots of $a$. Suppose that $M$ has finite cardinality.
Let $k$ be the number of primes dividing $ac_dc_n$.
Then
    $$|M| \le 3\cdot 7^{(n-d)(1+2k)}$$
\end{theorem}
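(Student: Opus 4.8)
The plan is to reduce the problem to a single $S$-unit equation over a number field and then invoke Evertse's quantitative bound from \cite{Evertse84}. The starting observation is that, since $\Z[X]/(X^m-a)$ is commutative, $f(x)$ is a unit there if and only if $g(x)$ is a unit for \emph{every} irreducible factor $g$ of $f$ (in a commutative ring a product is a unit iff each factor is). Writing the distinct irreducible factors of $f$ as $f_1,\dots,f_r$ and setting $M_i=\{m : f_i \text{ defines units on } m\text{-th roots of } a\}$, we obtain $M=\bigcap_i M_i$, so $M\subseteq M_i$ for every $i$. I would first dispose of the degenerate factors. If $f_i=X$ then $f_i$ defines units on $m$-th roots of $a$ precisely when $a=\pm1$, a condition independent of $m$; and if $f_i=\Phi_{m_i}$ is cyclotomic, then by Proposition~\ref{CyclotomicDefUnits} membership in $M_i$ depends on $m$ only through $\gcd(m,m_i)$, hence is periodic in $m$ modulo $m_i$.

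Consequently, if every irreducible factor of $f$ is either $X$ or cyclotomic, then $M$ is a union of residue classes modulo the least common multiple of the moduli $m_i$; such a set is either empty or infinite, so the hypothesis that $M$ is finite forces $M=\emptyset$ and the bound holds trivially. I may therefore assume that $f$ has an irreducible factor $f_0$ that is neither $X$ nor cyclotomic. Fix a complex root $\alpha$ of $f_0$; then $\alpha\neq0$ and, crucially, $\alpha$ is not a root of unity, so the map $m\mapsto\alpha^m$ is injective. Let $K=\Q(\alpha)$ and $e=[K:\Q]=\deg f_0$. Since $f_0$ divides $c_d+\dots+c_nX^{n-d}$ in $\Z[X]$, Gauss's lemma shows its leading coefficient divides $c_n$ and its constant term divides $c_d$, and moreover $e\le n-d$. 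By Lemma~\ref{Rewriting}, $M\subseteq M_0=\{m : \alpha^m-a \text{ is a unit in } \Z[\alpha]\}$.

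Now I set up the unit equation. Let $S$ consist of all infinite places of $K$ together with all finite places lying over the rational primes dividing $ac_dc_n$. Then $a$ is an $S$-unit (its bad primes divide $a$) and $\alpha$ is an $S$-unit (the finite places at which $\alpha$ is not a unit lie over divisors of $c_nc_d$), so $\Z[\alpha,\alpha^{-1}]\subseteq O_S$. For $m\in M_0$ the element $\alpha^m-a$ is a unit of $\Z[\alpha]\subseteq O_S$, whence $x=\alpha^m/a$ and $y=1-\alpha^m/a=-(\alpha^m-a)/a$ are $S$-units satisfying $x+y=1$. Distinct $m$ yield distinct $x$ because $\alpha$ is not a root of unity, so $|M|\le|M_0|$ is at most the number of solutions of $x+y=1$ in $S$-units. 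Applying Evertse's bound and estimating the data of $S$—the degree contributes $e\le n-d$, while there are at most $e$ places of $K$ over each of the at most $k$ rational primes dividing $ac_dc_n$, hence at most $ke\le k(n-d)$ finite places—yields $|M|\le 3\cdot 7^{e(1+2k)}\le 3\cdot 7^{(n-d)(1+2k)}$.

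The main obstacle is this last step: matching the exponent exactly to $(n-d)(1+2k)$ requires the precise shape of Evertse's estimate, in which the archimedean data enter through the degree and the non-archimedean data through twice the number of finite places of $S$, together with a careful count keeping the archimedean contribution to $e\cdot 1$ rather than $e\cdot 2$. A secondary delicate point is the periodicity argument of the first two paragraphs, which must be made airtight so that the all-cyclotomic case genuinely forces $M=\emptyset$ under the finiteness hypothesis; this is exactly what licenses passing to a single non-cyclotomic factor and exploiting the injectivity of $m\mapsto\alpha^m$, the feature that converts a bound on the number of $S$-unit solutions into a bound on $|M|$ itself.
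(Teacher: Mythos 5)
Your proof is correct and follows essentially the same route as the paper's: reduce to an irreducible factor that is neither $X$ nor cyclotomic, turn each $m\in M$ into a solution $\alpha^m/a+(1-\alpha^m/a)=1$ of the $S$-unit equation with at most $(n-d)k$ finite places in $S$, and combine Evertse's bound with the injectivity of $m\mapsto\alpha^m$. Your explicit handling of the case where every irreducible factor is $X$ or cyclotomic (periodicity forcing $M$ to be empty or infinite) and the Gauss's lemma justification that the extreme coefficients of the chosen factor divide $c_d$ and $c_n$ are details the paper leaves implicit, not a different method.
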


\begin{proof}
Suppose that $f_1$ is a divisor of $f$ in $\Z[X]$, with $f(0)\ne 0$.
Let $M_1$ be the set of positive integers $m$ such that $f_1$ defines units on $m$-th roots of $a$ and let $n_1$ and $k_1$ be respectively the degree of $f_1$ and the number of prime divisors of either the leading or independent term of $f_1$.
Then $M\subseteq M_1$, $n_1\le n-d$ and $k_1\le k$. Using this it is clear that it is enough to prove the result under the assumption that $f$ is irreducible and it is neither $X$ nor a cyclotomic polynomial. In particular $d=0$.

Let $\alpha$ be a root of $f$ and let $K=\Q(\alpha)$.
If $v$ is a non-Archimedian valuation of $K$, then $v(c)\le 0$ for every integer $c$. Thus, if $v(\alpha)<1$ then $v(c_0)=v(c_n\alpha^n+\dots +c_1\alpha)\le v(\alpha)<1$ and if $v(\alpha)>1$ then $v(c_n)=v(c_{n-1}\alpha\inv + \dots+ c_1\alpha^{1-n}+c_0\alpha^{-n})\le v(\alpha\inv)<1$. Therefore, every finite places $v$ of $K$ with $v(\alpha)\ne 1$ divides either $c_0$ or $c_n$. Every rational prime is divisible by a maximum of $n$ finite places and therefore if $S$ is the set of places $v$ of $K$ with $v(\alpha)\ne 1$ and $v(a)\ne 1$ then $|S|\le nk$. By the main result of \cite{Evertse84} the number of solutions of $X+Y=1$ on the units of $O_S$ is at most $3\cdot 7^{n+2|S|}\le 3\cdot 7^{n(1+2k)}$.
The proof of Theorem~\ref{InfiniteRoots} shows that then the cardinality of $\{\alpha^n : n\in M\}$ is at most $3\cdot 7^{n+2|S|}\le 3\cdot 7^{n(1+2k)}$.
As $\alpha$ is neither $0$ nor a root of unity, the map $n\rightarrow \alpha^n$ is injective. Thus $|M|\le 3\cdot 7^{n(1+2k)}$.
\end{proof}

\begin{corollary}\label{Bound}
Let $f=c_dX^d+c_{d+1}X^{d+1} + \dots +c_n X^n\in \Z[X]$ with $d\le n$ and $c_dc_n\ne 0$.
Let $k$ be the number of primes dividing $c_dc_n$.
Let $M$ be the set of positive integers $m$ such that $f$ defines a unit on order $m$.
If $f$ does not define generic units then
$$|M| \le 3\cdot 7^{(n-d)(1+2k)}$$
\end{corollary}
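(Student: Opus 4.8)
The plan is to obtain Corollary~\ref{Bound} as the specialization to $a=1$ of Theorem~\ref{BoundRoots}, once two preliminary points are settled. The entire content of the corollary is already packaged in the theorem together with the equivalence of Corollary~\ref{Infinite}, so the proof should amount to checking that the hypotheses match up.

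First I would recall that, by the definition given in the introduction, $f$ defines a unit on order $m$ precisely when $f$ defines units on $m$-th roots of $1$. Hence the set $M$ appearing in the corollary is exactly the set $M$ of Theorem~\ref{BoundRoots} evaluated at $a=1$, and there is nothing to translate between the two notions of unit.

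Second, I need to verify the standing hypothesis of Theorem~\ref{BoundRoots}, namely that $M$ is finite. This is precisely where the assumption that $f$ does not define generic units enters: by Corollary~\ref{Infinite}, $f$ defines generic units if and only if it defines units on infinitely many orders. Since by hypothesis $f$ does not define generic units, the set $M$ of orders on which $f$ defines a unit must be finite, so Theorem~\ref{BoundRoots} applies.

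Finally I would apply Theorem~\ref{BoundRoots} with $a=1$ and match the constants. In the theorem the exponent involves the number of primes dividing $ac_dc_n$; with $a=1$ this quantity is $c_dc_n$, whose number of prime divisors is exactly the $k$ of the corollary. Thus the bound of Theorem~\ref{BoundRoots} reads $3\cdot 7^{(n-d)(1+2k)}$, which is the asserted inequality. The only step needing any care is this bookkeeping check that setting $a=1$ does not enlarge the prime count $k$; beyond that there is no genuine obstacle, since all the substantive work — the $S$-unit input and Evertse's counting bound — has already been carried out in Theorem~\ref{BoundRoots}, and the finiteness of $M$ is supplied by Corollary~\ref{Infinite}.
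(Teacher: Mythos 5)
Your proposal is correct and matches the paper's (implicit) argument: the corollary is simply Theorem~\ref{BoundRoots} specialized to $a=1$, with the finiteness hypothesis on $M$ supplied by Corollary~\ref{Infinite} and the observation that $a=1$ does not change the prime count $k$. Nothing further is needed.
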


\bibliographystyle{amsalpha}
\bibliography{d:/ARTICULO/ReferencesMSC}

\providecommand{\bysame}{\leavevmode\hbox to3em{\hrulefill}\thinspace}
\providecommand{\MR}{\relax\ifhmode\unskip\space\fi MR }
\providecommand{\MRhref}[2]{%
  \href{http://www.ams.org/mathscinet-getitem?mr=#1}{#2}
}
\providecommand{\href}[2]{#2}
\begin{thebibliography}{EGST88}

\bibitem[EGST88]{EvertseGyoriStewart}
J.-H. Evertse, K.~Gy{\H{o}}ry, C.~L. Stewart, and R.~Tijdeman, \emph{{$S$}-unit
  equations and their applications}, New advances in transcendence theory
  ({D}urham, 1986), Cambridge Univ. Press, Cambridge, 1988, pp.~110--174.
  \MR{971998 (89j:11028)}

\bibitem[Eve84]{Evertse84}
J.-H. Evertse, \emph{On equations in {$S$}-units and the {T}hue-{M}ahler
  equation}, Invent. Math. \textbf{75} (1984), no.~3, 561--584. \MR{735341
  (85f:11048)}

\bibitem[Lan60]{Lang83}
S.~Lang, \emph{Integral points on curves}, Inst. Hautes \'Etudes Sci. Publ.
  Math. (1960), no.~6, 27--43. \MR{0130219 (24 \#A86)}

\bibitem[MS05]{MarciniakSehgal05}
Z.~S. Marciniak and S.~K. Sehgal, \emph{Generic units in abelian group rings},
  J. Group Theory \textbf{8} (2005), no.~6, 777--799. \MR{2179670
  (2007a:16044)}

\bibitem[Pie82]{Pierce}
R.~S. Pierce, \emph{Associative algebras}, Graduate Texts in Mathematics,
  vol.~88, Springer-Verlag, New York, 1982, Studies in the History of Modern
  Science, 9. \MR{674652 (84c:16001)}

\bibitem[Sch74]{Schinzel}
A.~Schinzel, \emph{Primitive divisors of the expression {$A^{n}-B^{n}$} in
  algebraic number fields}, J. Reine Angew. Math. \textbf{268/269} (1974),
  27--33, Collection of articles dedicated to Helmut Hasse on his seventy-fifth
  birthday, II. \MR{0344221 (49 \#8961)}

\bibitem[Seh93]{Sehgal}
S.~K. Sehgal, \emph{Units in integral group rings}, Pitman Monographs and
  Surveys in Pure and Applied Mathematics, vol.~69, Longman Scientific \&
  Technical, Harlow, 1993. \MR{1242557 (94m:16039)}

\end{thebibliography}

\end{document}